\newtheorem{theorem}{Theorem}[section]
\newtheorem{lemma}[theorem]{Lemma}
\newtheorem{proposition}[theorem]{Proposition}
\newtheorem{corollary}[theorem]{Corollary}
\theoremstyle{definition}
\newtheorem{definition}[theorem]{Definition}
\theoremstyle{remark}
\newtheorem{remark}[theorem]{Remark}
\numberwithin{equation}{section}
\newcommand{\C}{\mathbb{C}}
\newcommand{\R}{\mathbb{R}}
\newcommand{\re}{\operatorname{Re}}
\newcommand{\f}{\varphi}
\newcommand{\Rr}{\mathcal{R}}
\newcommand{\ep}{\varepsilon}
\newcommand{\B}{\mathcal B}
\newcommand{\F}{\mathcal F}
\newcommand{\Om}{\Omega}
\newcommand{\Hh}{\mathcal H}
\newcommand{\Cc}{\mathcal C}
\newcommand{\om}{\omega}
\newcommand{\T}{\mathcal T}
\begin{document}

\title[Property (T) and infinite measures]{Property (T) and actions on infinite measure spaces}

\author{Paul Jolissaint}
\address{Universit\'e de Neuch\^atel,
       Institut de Math\'ematiques,       
       E.-Argand 11,
       2000 Neuch\^atel, Switzerland}
       
\email{paul.jolissaint@unine.ch,}

\subjclass[2010]{Primary 22D10, 22D40; Secondary 28D05}

\date{\today}

\keywords{Locally compact groups, measure-preserving action, weakly mixing unitary representations, Property (T)}

\begin{abstract}
The aim of the article is to provide a characterization of Kazhdan's property (T) for locally compact, second countable pairs of groups $H\subset G$ in terms of actions on infinite, $\sigma$-finite measure spaces. It is inspired by the recent characterization of the Haagerup property by similar actions due to T. Delabie, A. Zumbrunnen and the author.
\end{abstract}

\maketitle

\section{Introduction}

Throughout this article, $G$ denotes a locally compact, second countable group (lcsc group for short); we assume furthermore that it is non-compact.

In \cite{DJZ}, the author and his co-authors T. Delabie and A. Zumbrunnen present a characterization of the Haagerup property in terms of actions on infinite, $\sigma$-finite measure spaces having an invariant mean and whose associated permutation representations are $C_0$.

Citing A. Valette in Chapter 7 of \cite{ccjjv}, "According to the philosophy that, to any characterization of property (T) there is a parallel characterization of the Haagerup property", the aim of the present note is to propose a sort of reciprocal to that statement, namely, to propose a new characterization of property (T) inspired by the above mentionned one of the Haagerup property. 

\par\vspace{2mm}
The objects under study here are what we call \textit{dynamical systems}: given a lcsc group $G$, such a dynamical system is a quadruple $(\Om,\B,\mu,G)$ where $(\Om,\B,\mu)$ is a measure space on which $G$ acts by $\mu$-preserving automorphisms. For brevity, if $\mu$ is infinite, we denote by $\B_f$ the subset of elements $B\in\B$ such that $0\leq \mu(B)<\infty$. 

Throughout the article, if $(\Om,\B,\mu,G)$ is a dynamical system as above, we denote by $\pi_\Om:G\rightarrow U(L^2(\Om,\B,\mu))$ the associated \textit{permutation representation} defined by
\[
(\pi_\Om(g)\xi)(\om):=\xi(g^{-1}\om)
\]
for $g\in G$ and $\om\in\Om$.

Let $(\pi,\Hh)$ be a unitary representation of the lcsc group $G$. Then recall from Definition 1.1.1 of \cite{BHV} that $\pi$ \textit{almost has invariant vectors} if, for every compact set $\emptyset\not=Q\subset G$ and for every $\ep>0$, there exists a unit vector $\xi\in\Hh$ such that
\[
\sup_{g\in Q}\Vert\pi(g)\xi-\xi\Vert<\ep.
\] 

Recall also from Definition 1.1 of \cite{JolT} that, if $H$ is a closed subgroup of a lcsc group $G$, then \textit{the pair} $H\subset G$ \textit{has Property (T)} if, for every unitary representation $\pi$ of $G$ which almost has invariant vectors, there exists a vector $\xi\not=0$ such that $\pi(h)\xi=\xi$ for every $h\in H$. In particular, $G$ has property (T) if the pair $G\subset G$ has property (T).

\par\vspace{2mm}

Before stating our main result, we need two definitions. The first one is a weakening of $C_0$-\textit{dynamical systems} from \cite{DJZ}: recall from Definition 1.2 of \cite{DJZ} that a dynamical system $(\Om,\B,\mu,G)$ is $C_0$ if, for all $A,B\in\B_f$, one has $\lim_{g\to\infty}\mu(gA\cap B)=0$.

\begin{definition}\label{wC0}
Let $(\Om,\B,\mu,G)$ be a dynamical system as above. We say that it is \textit{weakly} $C_0$ if, for all $A,B\in\B_f$ and for any $\ep>0$, there exists $g\in G$ such that $\mu(gA\cap B)<\ep$.
\end{definition}

Next, the following type of sequence $(B_m)_{m\geq 1}\subset \B_f$ plays a crucial role in the proof of the main result of \cite{DJZ} (proof of Proposition 2.8):

\begin{definition}\label{almostinv}
Let $(\Om,\B,\mu,G)$ be a dynamical system. A sequence $(B_m)_{m\geq 1}\subset \B_f$ is said to be \textit{almost invariant} if $\mu(B_m)=1$ for every $m\geq 1$ and if, for every compact set $K\subset G$, one has
\[
\lim_{m\to\infty}\sup_{g\in K}\mu(gB_m\cap B_m)=1.
\]
\end{definition}

\begin{remark}\label{rem1.3}
Let $(\Om,\B,\mu,G)$ be a weakly $C_0$ dynamical system.
\begin{enumerate}
\item [(1)] If $A\in\B_f$ is $G$-invariant then $\mu(A)=0$. In particular, a non-trivial weakly $C_0$ dynamical system has an infinite measure.
\item [(2)] As will be explained in more details in Section 2, the unitary representation $\pi_\Om$ is \textit{weakly mixing} in the sense of \cite{BR}, which means, by Theorem 1.9 of \cite{BR}, that $\pi_\Om$ has no non-trivial finite-dimensional subrepresentation.
\end{enumerate} 
\end{remark}

\begin{remark}\label{rem1.4}
Let $(\Om,\B,\mu,G)$ be a dynamical system which contains an almost invariant sequence of sets $(B_m)_{m\geq 1}\subset \B_f$. Then the representation $(\pi_\Om,L^2(\Om,\B,\mu))$ almost has invariant vectors namely, $\xi_m:=\chi_{B_m}\in L^2(\Om,\B,\mu)$ is a unit vector for every $m\geq 1$ and, for every compact set $Q\subset G$ and every $\ep>0$, one has
\[
\sup_{g\in Q}\Vert \pi_\Om(g)\xi_m-\xi_m\Vert<\ep
\]
for every large enough $m$.\\
Indeed, one has for all $g$ and $m$
\begin{align*}
\Vert \pi_\Om(g)\xi_m-\xi_m\Vert^2
&=
2(1-\re\langle\pi_\Om(g)\xi_m|\xi_m\rangle)\\
&=
2(1-\int_\Om \chi_{gB_m}(\om)\chi_{B_m}(\om)d\mu(\om))\\
&=
2(1-\mu(gB_m\cap B_m))
\end{align*}
which converges to $0$ uniformly on $Q$ as $m\to\infty$.
\end{remark}

Here is our main result.

\begin{theorem}\label{mainthm}
Let $G$ be a lcsc group and let $H\subset G$ be a closed subgroup of $G$.
\begin{enumerate}
\item [(1)] If the pair $H\subset G$ has Property (T) and if $(\Om,\B,\mu,G)$ is a dynamical system such that $(\Om,\B,\mu,H)$ is weakly $C_0$, then $(\Om,\B,\mu,G)$ admits no almost invariant sequence.
\item [(2)] If the pair $H\subset G$ does not have Property (T), then there exists a $\sigma$-finite, dynamical system $(\Om,\B,\mu,G)$ which has an almost invariant sequence and whose restriction $(\Om,\B,\mu,H)$ is weakly $C_0$.
\end{enumerate}
\end{theorem}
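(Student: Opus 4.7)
The plan for (1) is to combine Remarks \ref{rem1.3}(2) and \ref{rem1.4} to derive a direct contradiction. If $(B_m) \subset \B_f$ were an almost invariant sequence, then the unit vectors $\chi_{B_m} \in L^2(\Om, \B, \mu)$ would witness, by Remark \ref{rem1.4}, that $\pi_\Om$ almost has $G$-invariant vectors. Property (T) for the pair $H \subset G$ would then produce a non-zero vector fixed by $\pi_\Om(H)$, yielding a one-dimensional $H$-invariant subspace of $L^2(\Om, \B, \mu)$ and contradicting the weak mixing of $\pi_\Om|_H$ obtained from Remark \ref{rem1.3}(2) applied to the weakly $C_0$ system $(\Om, \B, \mu, H)$.

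For (2), the plan is to construct $(\Om, \B, \mu, G)$ from a representation witnessing the failure of Property (T). Since the pair $H \subset G$ does not have (T), I would first fix a unitary representation $\rho$ of $G$ almost having $G$-invariant vectors but with no non-zero $H$-invariant vector. After possibly modifying $\rho$ (by an appropriate tensor or direct sum operation) so that its restriction to $H$ has no non-zero finite-dimensional subrepresentation, I would realize $\rho$ inside the Koopman representation of a probability measure-preserving $G$-action via the Gaussian construction: the Gaussian space $(X, \nu)$ associated with $\rho$ has first Wiener chaos isomorphic to $\rho$, and the $H$-action on $(X, \nu)$ is then weakly mixing.

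To pass from the probability measure to a $\sigma$-finite infinite measure and to secure the $H$-weakly $C_0$ property, I would form the product $(\Om, \B, \mu) := (X \times Y, \nu \otimes \eta)$, where $(Y, \eta, G)$ is an infinite-measure $G$-system whose $H$-action is $C_0$. The canonical choice is $(Y, \eta) = (G, \text{Haar}_G)$ with the left regular action: for compact $A, B \subset G$, the quantity $\eta(hA \cap B)$ vanishes once $h \in H$ leaves a compact subset, giving the $C_0$ property for $H$. With the diagonal $G$-action on $\Om$, matrix coefficients on product sets factor, so the $C_0$ behaviour of the $H$-action on $Y$ transfers to give $(\Om, \B, \mu, H)$ weakly $C_0$.

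The main obstacle is producing the almost invariant sequence of measure-one sets in $\Om$. The natural approach is to extract almost invariant subsets $A_m \subset X$ of $\nu$-measure $\alpha_m > 0$ from level sets of almost invariant functions in $L^2(X, \nu)$, and pair them with sets $C_m \subset Y$ of $\eta$-measure $1/\alpha_m$ to build $B_m := A_m \times C_m$ with $\mu(B_m) = 1$. The almost invariance condition $\mu(gB_m \cap B_m) \to 1$ then reduces to needing $(C_m)$ to be a F{\o}lner sequence in $Y$, which is immediate when $G$ is amenable but fails in the non-amenable case. Handling the non-amenable case is the heart of the argument: it will require either a more refined choice of $(Y, \eta)$ (for instance a coset space $G/L$ for a co-amenable closed subgroup $L$) or a non-product construction of $(B_m)$ that exploits the almost invariant structure of $\rho$ intrinsically, while verifying that weak $C_0$ for $H$ is preserved throughout.
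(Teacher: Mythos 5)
Your argument for part (1) is correct and is essentially the paper's: an almost invariant sequence makes $\pi_\Om$ almost have invariant vectors (Remark \ref{rem1.4}), Property (T) of the pair then yields a non-zero $H$-fixed vector, and this contradicts the weak mixing of $\pi_\Om|_H$ coming from the weakly $C_0$ hypothesis. Nothing to add there. (One minor point: you should say a word about how you ``modify $\rho$'' in part (2) so that $\rho|_H$ becomes weakly mixing while $\rho$ keeps almost invariant vectors for $G$ and the Gaussian construction still applies; the paper does this by extracting a conditionally negative definite function $\psi$ unbounded on $H$, forming the positive definite functions $\exp(-\psi/n)$, and passing to the symmetric Fock space, where unboundedness of $\psi$ on double cosets forces weak mixing of $\pi_0^\sigma|_H$. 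This step is real work, not a routine tensor trick.)

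The genuine gap is in the last stage of part (2): the passage from the probability space $(S,\nu)$ to an infinite, $\sigma$-finite system carrying an almost invariant sequence of measure-one sets. Your proposal $(\Om,\mu)=(X\times G,\nu\otimes\mathrm{Haar})$ with $B_m=A_m\times C_m$ requires $(C_m)$ to be a F{\o}lner sequence in $G$, so it only works when $G$ is amenable --- and for amenable $G$ the pair $H\subset G$ fails (T) exactly when $H$ is non-compact, so the amenable case is the degenerate one. The non-amenable case, which you correctly identify as ``the heart of the argument,'' is left unresolved; neither of your two suggested escapes (a co-amenable coset space $G/L$, or an unspecified non-product construction) is carried out, and the first is not available in general. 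As it stands the proof of (2) is therefore incomplete. The paper's resolution is quite different and avoids any Haar-measure factor: it takes the \emph{infinite} product $X=\prod_{k\geq 1}S$ equipped with the restricted product measure of \cite{DJZ}, normalized so that $\mu\bigl(\prod_k C_k\bigr)=\prod_k 2\nu(C_k)$. Choosing $A_{n(k,m)}$ with $\nu(A_{n(k,m)})=1/2$ and $\sup_{g\in K_m}|\nu(gA_{n(k,m)}\cap A_{n(k,m)})-\tfrac12|\leq\tfrac12(1-e^{-1/(m2^k)})$, the sets $B_m=\prod_k A_{n(k,m)}$ satisfy $\mu(B_m)=1$ and $\mu(gB_m\cap B_m)\geq e^{-1/m}$ on $K_m$, giving the almost invariant sequence with no amenability input; weak mixing of the $H$-action on $S$ then yields the weakly $C_0$ property of the product via Lemma 2.5, and $\sigma$-finiteness is obtained by restricting to $\Om=\bigcup_{h\in D}h\bigl(\bigcup_m B_m\bigr)$. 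Without an analogue of this infinite-product mechanism your construction cannot be completed.
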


The proof of Theorem \ref{mainthm} will be given in Section 3, and it rests partly on weakly mixing representations and weakly mixing actions of groups on probability measure-preserving spaces, which is the subject of the next section.

\section{Weakly mixing representations, weakly mixing actions}

As mentioned in Section 1, we will see that the notion of weakly $C_0$ dynamical systems is closely related to weakly mixing representations and weakly mixing actions.
\par\vspace{2mm}
 
Our references for the latter properties are the article \cite{BR} of V. Bergelson and J. Rosenblatt on the one hand, and Chapters 1 and 2 of the monograph \cite{Glas} by E. Glasner on the other hand.

\par\vspace{3mm}\noindent
\textbf{Notation}
Let $G$ be a lcsc group and let $(S,\B_S,\nu)$ be a probability space on which $G$ acts by Borel automorphisms and preserves $\nu$. Then the subset 
\[
L^2_0(S,\nu):=\left\{\xi\in L^2(S,\nu) \colon \int_S\xi d\nu=0\right\}
\] 
is a $G$-invariant closed subspace of $L^2(S,\nu)$, and we denote by $\pi_{S,0}$ the restriction of $\pi_S$ to $L^2_0(S,\nu)$.

\begin{definition}
Let $G$ be a lcsc group.
\begin{enumerate}
\item [(1)] A unitary representation $(\pi,\Hh)$ of $G$ is \textit{weakly mixing} if it contains no non-trivial finite-dimensional subrepresentation.
\item [(2)] Let $(S,\B_S,\nu,G)$ be a dynamical system where $\nu$ is a $G$-invariant probability measure. We say that the action of $G$ on $S$ is \textit{weakly mixing} if the representation $\pi_{S,0}$ is a weakly mixing representation.
\end{enumerate}
\end{definition} 

\begin{remark}
The original definition of weakly mixing representations involve the space of weakly almost periodic functions $WAP(G)$ on $G$ and the unique invariant mean on it (cf. \cite{BR}, Definition 1.1, and \cite{Glas}, Definition 3.2), and the characterization in terms of finite-dimensional subrepresentations is for instance Theorem 1.9 of \cite{BR}.
We have chosen not to introduce $WAP(G)$ because we feel that it is useless in the present context.
\end{remark}

We gather some characterisations of weakly mixing representations that will be used in the next section.

\begin{proposition}
Let $G$ be a lcsc group, let $(\pi,\Hh)$ be a unitary representation of $G$ and let $\T$ be a total subset of $\Hh$. The following conditions are equivalent:
\begin{enumerate}
\item [(a)] $(\pi,\Hh)$ is a weakly mixing representation of $G$;
\item [(b)] for every $\ep>0$ and for all $\xi_1,\ldots,\xi_m\in\Hh$, there exists $g\in G$ such that $|\langle\pi(g)\xi_j|\xi_j\rangle|<\ep$ for all $j=1,\ldots,m$;
\item [(c)] for every $\ep>0$ and for all $\xi_1,\ldots,\xi_m\in\T$, there exists $g\in G$ such that $|\langle\pi(g)\xi_j|\xi_k\rangle|<\ep$ for all $j,k=1,\ldots,m$.
\end{enumerate}
\end{proposition}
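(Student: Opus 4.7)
The plan is to prove the cyclic implications $(a) \Rightarrow (b) \Rightarrow (c) \Rightarrow (a)$.

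For $(a) \Rightarrow (b)$, I use a tensor-square trick based on the fact that $\pi$ is weakly mixing if and only if the tensor representation $\pi \otimes \bar\pi$ on $\Hh \otimes \bar\Hh$ admits no non-zero $G$-invariant vector. Indeed, under the isomorphism of $\Hh \otimes \bar\Hh$ with the space of Hilbert--Schmidt operators on $\Hh$, a $G$-invariant vector corresponds to a non-zero Hilbert--Schmidt operator commuting with $\pi(G)$; its non-zero spectral projections are finite-rank and $G$-invariant, producing finite-dimensional subrepresentations of $\pi$, and conversely the orthogonal projection onto any non-trivial finite-dimensional $G$-invariant subspace is such an operator. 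Given $\xi_1, \ldots, \xi_m \in \Hh$ not all zero, set $\eta := \sum_{j=1}^m \xi_j \otimes \bar\xi_j \in \Hh \otimes \bar\Hh$. The closed convex hull of the orbit $\{(\pi \otimes \bar\pi)(g)\eta : g \in G\}$ is bounded, convex, and $G$-invariant, so its unique element of minimum norm is $G$-fixed, hence zero. For arbitrary $\delta > 0$ one can thus find a convex combination $\sum_i \lambda_i (\pi \otimes \bar\pi)(g_i)\eta$ of norm less than $\delta$. Pairing with $\eta$ and using
\[
\langle (\pi \otimes \bar\pi)(g) \eta | \eta \rangle = \sum_{j,k} |\langle \pi(g) \xi_j | \xi_k \rangle|^2 \geq 0,
\]
a pigeonhole argument yields some $g = g_i$ with $\sum_{j,k} |\langle \pi(g)\xi_j | \xi_k \rangle|^2 < \delta\|\eta\|$; the choice $\delta = \ep^2 /\|\eta\|$ gives $(b)$.

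The implication $(b) \Rightarrow (c)$ is a polarization argument: given $\xi_1, \ldots, \xi_m \in \T$ and $\ep > 0$, apply $(b)$ with parameter $\ep$ to the enlarged finite collection $\{\xi_j + i^\ell \xi_k : 1 \leq j, k \leq m,\ 0 \leq \ell \leq 3\} \subset \Hh$ to obtain a single $g$ making every diagonal coefficient of this collection bounded by $\ep$. Then
\[
4 \langle \pi(g) \xi_j | \xi_k \rangle = \sum_{\ell=0}^3 i^\ell \langle \pi(g)(\xi_j + i^\ell \xi_k) | \xi_j + i^\ell \xi_k \rangle
\]
forces $|\langle \pi(g) \xi_j | \xi_k \rangle| \leq \ep$ for all $j,k$.

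For $(c) \Rightarrow (a)$, I argue by contradiction: assume $V \subset \Hh$ is a non-trivial finite-dimensional $G$-invariant subspace of dimension $n$ with orthonormal basis $e_1, \ldots, e_n$. Since $\T$ is total, one can select $\tau_1, \ldots, \tau_N \in \T$ and scalars $c_{il}$ so that the vectors $\eta_i := \sum_l c_{il} \tau_l$ satisfy $\|e_i - \eta_i\| < \delta$ for any prescribed $\delta > 0$. Apply $(c)$ to $\{\tau_l\}_{l=1}^N$ with a small parameter $\ep'$ to obtain $g \in G$ with $|\langle \pi(g)\tau_l | \tau_{l'}\rangle| < \ep'$ for all $l, l'$; linear expansion and the triangle inequality then give $|\langle \pi(g) e_i | e_{i'}\rangle| \leq C \ep' + 3\delta$ for a constant $C$ depending only on the $c_{il}$. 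Choosing $\delta$ first and then $\ep'$ small enough forces every matrix entry of the unitary operator $\pi(g)|_V$ to lie below $1/\sqrt{n}$, contradicting $\sum_{i'} |\langle \pi(g) e_i | e_{i'} \rangle|^2 = 1$.

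The principal technical point lies in $(a) \Rightarrow (b)$, specifically in the invariant-vector characterization of weak mixing via $\pi \otimes \bar\pi$; the remaining steps are polarization and elementary approximation in a dense subset.
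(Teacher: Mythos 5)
Your argument is correct, but it takes a genuinely different and more self-contained route than the paper. The paper disposes of the equivalence (a)$\Leftrightarrow$(b) by citing Corollary 1.6 and Theorem 1.9 of \cite{BR}, and then obtains (b)$\Leftrightarrow$(c) by polarization and density of the span of $\T$. You instead prove (a)$\Rightarrow$(b) from scratch: you invoke the characterization of weak mixing by the absence of non-zero invariant vectors for $\pi\otimes\bar\pi$ (which you justify correctly via Hilbert--Schmidt operators commuting with $\pi(G)$ and their finite-rank spectral projections), and then run the standard minimal-norm-element argument on the closed convex hull of the orbit of $\eta=\sum_j\xi_j\otimes\bar\xi_j$; the positivity of $\langle(\pi\otimes\bar\pi)(g)\eta|\eta\rangle=\sum_{j,k}|\langle\pi(g)\xi_j|\xi_k\rangle|^2$ makes the pigeonhole step work, and in fact yields the stronger conclusion that all cross terms $|\langle\pi(g)\xi_j|\xi_k\rangle|$ are small, not only the diagonal ones demanded by (b). Your (b)$\Rightarrow$(c) is the same polarization identity the paper uses, applied to the finite family $\{\xi_j+i^\ell\xi_k\}$, and your (c)$\Rightarrow$(a) is a direct contradiction argument with a finite-dimensional invariant subspace (approximating an orthonormal basis by combinations of elements of $\T$ and contradicting $\sum_{i'}|\langle\pi(g)e_i|e_{i'}\rangle|^2=1$), whereas the paper would close the cycle by going back through \cite{BR}. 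What your approach buys is independence from the external reference and a quantitatively stronger version of (b); what the paper's approach buys is brevity. Two cosmetic points: your conclusions come out as $\leq\ep$ rather than $<\ep$ in a couple of places (run the argument with $\ep/2$), and in (c)$\Rightarrow$(a) one should record explicitly that $\delta$ is chosen before $\ep'$ because the constant $C$ depends on the coefficients $c_{il}$, which you do state correctly.
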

\begin{proof} Equivalence between (a) and (b) follows from Corollary 1.6 and from Theorem 1.9 of \cite{BR}, and equivalence between (b) and (c) is a consequence of density of the span of $\T$ and of polar decomposition of scalar products in Hilbert spaces; namely, 
\begin{align*}
\langle\xi|\eta\rangle
={}&
\frac{1}{4}(\langle\xi+\eta|\xi+\eta\rangle-\langle\xi-\eta|\xi-\eta\rangle\\
&
+i\langle\xi+i\eta|\xi+i\eta\rangle-i\langle\xi-i\eta|\xi-i\eta\rangle).
\end{align*}
for all $\xi,\eta\in \Hh$. 
\end{proof}

\begin{corollary}
Let $G$ be a lcsc group and let $(\Om,\B,\mu,G)$ be a dynamical system. Then the following conditions are equivalent:
\begin{enumerate}
\item [(a)] $(\Om,\B,\mu,G)$ is weakly $C_0$;
\item [(b)] the permutation representation $\pi_\Om$ is weakly mixing.
\end{enumerate}
\end{corollary}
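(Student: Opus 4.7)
The plan is to apply the previous proposition with the total subset $\T:=\{\chi_A\colon A\in\B_f\}\subset L^2(\Om,\B,\mu)$, exploiting the elementary identity
\[
\langle\pi_\Om(g)\chi_A|\chi_B\rangle=\int_\Om\chi_{gA}(\om)\chi_B(\om)\,d\mu(\om)=\mu(gA\cap B)\geq 0
\]
for $A,B\in\B_f$ and $g\in G$. In particular the modulus $|\langle\pi_\Om(g)\chi_A|\chi_B\rangle|$ is itself equal to $\mu(gA\cap B)$, so both conditions in the corollary are statements about making such intersection measures small by choosing $g$ appropriately.

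For the direction (b) $\Rightarrow$ (a), I would simply feed two vectors into condition (c) of the previous proposition. Given $A,B\in\B_f$ and $\ep>0$, the weak mixing of $\pi_\Om$ yields some $g\in G$ with $|\langle\pi_\Om(g)\chi_A|\chi_B\rangle|<\ep$, which by the identity above is exactly $\mu(gA\cap B)<\ep$, i.e.\ weak $C_0$-ness of the system.

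The direction (a) $\Rightarrow$ (b) requires verifying condition (c) of the previous proposition. Given $\ep>0$ and vectors $\chi_{A_1},\ldots,\chi_{A_m}\in\T$, the natural move is a \emph{union trick}: set $A:=A_1\cup\cdots\cup A_m\in\B_f$, and apply the weakly $C_0$ hypothesis to the single pair $(A,A)$ to obtain $g\in G$ with $\mu(gA\cap A)<\ep$. Since $gA_j\cap A_k\subset gA\cap A$ for all $j,k$, we conclude $|\langle\pi_\Om(g)\chi_{A_j}|\chi_{A_k}\rangle|=\mu(gA_j\cap A_k)<\ep$ simultaneously for all pairs, which is exactly condition (c).

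The only substantive point is the union trick, which is what lets the pairwise definition of weakly $C_0$ (a statement only about two sets) upgrade to the stronger-looking simultaneous statement on all $m^2$ matrix coefficients needed by condition (c). Non-negativity of $\mu(gA\cap B)$ ensures that no absolute-value/cancellation phenomena can obstruct this, which is why one does not need the more elaborate polarisation argument used to pass from (b) to (c) in the general proposition.
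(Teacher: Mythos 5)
Your proof is correct and follows essentially the same route as the paper: both directions rest on the identity $\langle\pi_\Om(g)\chi_A|\chi_B\rangle=\mu(gA\cap B)$, the direction (b) $\Rightarrow$ (a) is the case $m=2$ of condition (c) of the proposition, and the direction (a) $\Rightarrow$ (b) uses exactly the same union trick $A=B=\bigcup_j A_j$ together with totality of the characteristic functions of finite-measure sets. Your explicit remark that monotonicity plus non-negativity is what makes the union trick work is a point the paper leaves implicit, but the argument is the same.
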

\begin{proof}
Assume that condition (a) holds. This means that for all $A,B\in \B_f$, and for every $\ep>0$, there exists $g\in G$ such that $|\langle\pi_\Om(g)\chi_A|\chi_B\rangle|<\ep$. If $A_1,\ldots,A_m\in\B_f$ and if $\ep>0$, then considering $A=B=\bigcup_{j=1}^m A_j$ which belongs to $\B_f$, there exists $g\in G$ such that $|\langle\pi_\Om(g)\chi_{A_j}|\chi_{A_k}\rangle|<\ep$ for all $j,k=1,\ldots,m$. As linear combinations of characteristic functions $\chi_A$ with $A\in B_f$ are dense in $L^2(\Om,\B,\mu)$, this proves that condition (b) holds.\\
Conversely, if (b) holds, then (a) is the special case $m=2$ with $\xi_1=\chi_{A}$ and $\xi_2=\chi_B$ in condition (c) of Proposition 2.3.
\end{proof}

\begin{lemma}
Let $(S,\B_S,\nu)$ be a probability space on which $G$ acts by $\nu$-preserving Borel automorphisms, and assume that the action is weakly mixing. Then, for every $\ep>0$ and for all $A_1,\ldots,A_m\in \B_S$, there exists $g\in G$ such that
\[
|\nu(gA_j\cap A_k)-\nu(A_j)\nu(A_k)|<\ep
\]
for all $j,k=1,\ldots,m$.
\end{lemma}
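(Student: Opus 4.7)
The plan is to reduce the statement to condition (c) of Proposition 2.3 applied to the weakly mixing representation $\pi_{S,0}$ on $L^2_0(S,\nu)$. The natural vectors to feed in are the centered characteristic functions $\xi_j := \chi_{A_j} - \nu(A_j)\cdot 1$, which lie in $L^2_0(S,\nu)$ since $\int_S \xi_j\, d\nu = 0$.

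First I would rewrite the quantity $\nu(gA_j\cap A_k)$ as a scalar product in $L^2(S,\nu)$. Since $(\pi_S(g)\chi_{A_j})(s) = \chi_{A_j}(g^{-1}s) = \chi_{gA_j}(s)$, we have
\[
\nu(gA_j\cap A_k) = \langle \pi_S(g)\chi_{A_j}\mid \chi_{A_k}\rangle.
\]
Next I would substitute $\chi_{A_j} = \xi_j + \nu(A_j)\cdot 1$ into this expression and expand. The constant function $1$ is $G$-invariant and orthogonal to every $\xi_i$, so the two cross terms $\nu(A_k)\langle\pi_S(g)\xi_j\mid 1\rangle$ and $\nu(A_j)\langle 1\mid \xi_k\rangle$ vanish, leaving
\[
\nu(gA_j\cap A_k) - \nu(A_j)\nu(A_k) = \langle \pi_{S,0}(g)\xi_j\mid \xi_k\rangle.
\]

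Finally I would invoke Proposition 2.3: the representation $\pi_{S,0}$ is weakly mixing by assumption, so taking $\T = L^2_0(S,\nu)$ (trivially total in itself), condition (c) yields, for the prescribed $\ep>0$ and the finite family $\xi_1,\ldots,\xi_m \in L^2_0(S,\nu)$, an element $g\in G$ with $|\langle \pi_{S,0}(g)\xi_j\mid \xi_k\rangle| < \ep$ for all $j,k$. Substituting back gives the desired inequality.

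There is no real obstacle here; the only thing to be careful about is the bookkeeping in the expansion of the inner product, namely confirming that both cross terms genuinely vanish (which uses that $1$ is $G$-invariant, hence $\pi_S(g)1 = 1$, and that the $\xi_j$ are mean zero). Everything else is a direct application of the already established Proposition 2.3.
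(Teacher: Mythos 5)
Your proposal is correct and follows essentially the same route as the paper: both center the characteristic functions to get $\xi_j=\chi_{A_j}-\nu(A_j)\in L^2_0(S,\nu)$, identify $\nu(gA_j\cap A_k)-\nu(A_j)\nu(A_k)$ with $\langle\pi_{S,0}(g)\xi_j\mid\xi_k\rangle$, and invoke condition (c) of Proposition 2.3. The only cosmetic difference is that you verify the vanishing of the cross terms explicitly, while the paper absorbs this into a direct three-line expansion of the inner product.
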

\begin{proof}
For $j=1,\ldots,m$, set $\xi_j:=\chi_{A_j}-\nu(A_j)$. Then $\xi_j\in L^2_0(S,\nu)$ for every $j$, and, for every $\ep>0$, by Proposition 2.3, there exists $g\in G$ such that $|\langle\pi_{S,0}(g)\xi_j|\xi_k\rangle|<\ep$ for all $j,k=1,\ldots,m$. But we have
\begin{align*}
\langle\pi_{S,0}(g)\xi_j|\xi_k\rangle
&=
\langle\chi_{gA_j}-\nu(A_j)|\chi_{A_k}-\nu(A_k)\rangle\\
&=
\nu(gA_j\cap A_k)-2\nu(A_j)\nu(A_k)+\nu(A_j)\nu(A_k)\\
&=
\nu(gA_j\cap A_k)-\nu(A_j)\nu(A_k)
\end{align*}
for all $j,k=1,\ldots,m$.
\end{proof}

\section{Proof of Theorem \ref{mainthm}}

Let $H\subset G$ be a pair of lcsc groups with Property (T) as in statement (1) of Theorem \ref{mainthm}, and let $(\Om,\B,\mu,G)$ be a  dynamical system whose restriction to $(\Om,\B,\mu,H)$ is weakly $C_0$. Corollary 2.4 implies that the permutation representation $\pi_\Om$ of $H$ is weakly mixing, hence that it does not have any non-trivial finite-dimensional subrepresentation. If there existed an almost invariant sequence of sets $(B_m)\subset \B_f$ as in Definition \ref{almostinv}, then the unitary representation $\pi_\Om$ of $G$ would almost have invariant vectors, hence there would exist a non-zero vector $\xi$ such that $\pi_\Om(h)\xi=\xi$ for every $h\in\Hh$ by Property (T), but this contradicts the weakly mixing property of $\pi_\Om$ restricted to $H$. Hence $(\Om,\B,\mu,G)$ has no almost invariant sequence.

\par\vspace{2mm}
The rest of the present section is devoted to the proof of statement (2) of Theorem \ref{mainthm}. Thus we assume henceforth that the pair $H\subset G$ does not have Property (T). 
Let us choose an increasing sequence of compact subsets $(K_n)_{n\geq 1}$ of $G$ with the following properties: $e\in \mathring{K}_1$, $K_n\subset \mathring{K}_{n+1}$ for every $n\ge 1$ and $G=\bigcup_{n\geq 1}K_n$. Hence, $K_1$ is a compact neighbourhood of $e$, and for every compact set $K\subset G$, there exists $m$ such that $K\subset K_m$.

\begin{lemma}
With the above hypotheses, there exists a conditionally negative definite function $\psi:G\rightarrow\R_+$ whose restriction to $H$ is unbounded.
\end{lemma}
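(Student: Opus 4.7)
Since the pair $H\subset G$ is assumed to lack Property (T), Definition 1.1 of \cite{JolT} furnishes a unitary representation $(\pi,\Hh)$ of $G$ that almost has $G$-invariant vectors yet admits no non-zero $H$-invariant vector. My plan is the classical Delorme--Guichardet construction adapted to the pair: I will build $\psi$ as a weighted sum
\[
\psi(g):=\sum_{n\geq 1} n\,\psi_n(g)
\]
of the continuous, $\R_+$-valued, conditionally negative definite functions $\psi_n(g):=\Vert\pi(g)\xi_n-\xi_n\Vert^2$ attached to a judiciously chosen sequence of unit vectors $\xi_n\in\Hh$, using the exhaustion $(K_n)$ of $G$ on the one hand to force convergence and on the other hand to force unboundedness of $\psi|_H$.

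For the convergence and regularity of $\psi$, I would use almost invariance of $\pi$ to choose $\xi_n$ satisfying $\sup_{g\in K_n}\psi_n(g)<2^{-n}$. Pointwise convergence on all of $G$ then follows because any $g$ lies in some $K_m$: the tail is dominated by $\sum_{n\geq m} n\cdot 2^{-n}<\infty$, while the finite head is crudely bounded using $\psi_n\leq 4$. Being a positive linear combination of continuous conditionally negative definite functions, $\psi$ is itself continuous, $\R_+$-valued, and conditionally negative definite.

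The crux is to show $\psi|_H$ is unbounded, for which it suffices to exhibit, for each $n$, an element $h_n\in H$ with $\psi_n(h_n)\geq 1$, since then $\psi(h_n)\geq n\to\infty$. Here the only available input is that $\pi|_H$ has no non-zero invariant vector. By the Alaoglu--Birkhoff mean ergodic theorem (pure Hilbert-space geometry: the norm-closed convex hull of an orbit admits a unique element of minimal norm, which is the projection onto invariants), the norm-closure of the convex hull of $\{\pi(h)\xi_n:h\in H\}$ contains the projection of $\xi_n$ onto the $H$-invariants, namely $0$. Hence for any $\delta\in(0,1)$ there is a convex combination $\sum_i t_i\pi(h_i)\xi_n$ of norm less than $\delta$; writing $\xi_n=\sum_i t_i\xi_n$, the triangle inequality gives $\Vert\sum_i t_i(\xi_n-\pi(h_i)\xi_n)\Vert\geq 1-\delta$, and convexity of the squared norm yields $\max_i \psi_n(h_i)\geq (1-\delta)^2$, so some $h_n=h_i\in H$ does the job.

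The main obstacle is precisely this last step: extracting a uniform (in $n$) positive lower bound for $\sup_{h\in H}\psi_n(h)$ from the purely qualitative absence of an $H$-fixed vector in $\pi$. Once the Alaoglu--Birkhoff argument is in hand, the remainder is summation bookkeeping and the standard fact that positive linear combinations of c.n.d.\ functions are c.n.d.
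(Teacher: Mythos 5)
Your proof is correct, but it follows a genuinely different (more self-contained) route than the paper. The paper invokes Theorem 1.2 of \cite{JolT}, which already characterizes failure of Property (T) for the pair $H\subset G$ by the existence of normalized real positive definite functions $\psi_n\to 1$ uniformly on compacta with $\sup_{h\in H}|\psi_n(h)-1|\not\to 0$; after passing to a subsequence this hands over, for free, elements $h_n\in H$ with $1-\psi_n(h_n)\geq c>0$, and the paper then sums $\sum_n\sqrt{n}(1-\psi_n)$. You instead start from the bare definition (a representation $\pi$ with almost invariant vectors but no non-zero $H$-invariant vector) and must therefore manufacture the uniform lower bound on $H$ yourself; the Alaoglu--Birkhoff argument is exactly the right tool and is correctly executed. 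Note that the two constructions are essentially the same object in disguise: your $\psi_n(g)=\Vert\pi(g)\xi_n-\xi_n\Vert^2$ equals $2(1-\re\langle\pi(g)\xi_n|\xi_n\rangle)$, i.e.\ twice the paper's $1-\psi_n$ for the matrix coefficient attached to $\xi_n$. What your approach buys is independence from the cited theorem (you only need the definition plus a standard Hilbert-space fact); what the paper's approach buys is brevity, since the quantitative non-convergence on $H$ is already packaged in the citation. One cosmetic slip: your Alaoglu--Birkhoff step yields $\max_i\psi_n(h_i)\geq(1-\delta)^2$, which is strictly less than the ``$\psi_n(h_n)\geq 1$'' you announced; but any uniform positive lower bound (say $1/2$, taking $\delta$ small) gives $\psi(h_n)\geq n/2\to\infty$, so nothing is lost. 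You should also make explicit that the tail estimate $\sum_{n\geq m}n\,2^{-n}<\infty$ gives uniform convergence on each $K_m$, hence continuity of $\psi$ (needed for the Gaussian construction that follows), though this is implicit in what you wrote.
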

\begin{proof}
By Theorem 1.2 of \cite{JolT}, there exists a sequence $(\psi_n)_{n\geq 1}$ of real-valued, positive definite and normalized functions on $G$ which converges to $1$ uniformly on compacts sets, but such that
\[
\sup_{h\in H}|\psi_n(h)-1|\not\to 0.
\] 
Extracting subsequences if necessary, we assume that there exists $c>0$ and a sequence $(h_n)_{n\geq 1}\subset H$ such that $1-\psi_n(h_n)\geq c$
and
\[
\sup_{g\in K_n}|\psi_n(g)-1|\leq \frac{1}{n^2}
\]
for all $n$. Then set 
\[
\psi=\sum_{n\geq 1}\sqrt{n}(1-\psi_n).
\]
It defines a conditionally negative definite function on $G$ which satisfies 
\[
\psi(h_k)=\sum_{n\not=k}\sqrt{n}(1-\psi_n(h_k))+\sqrt{k}(1-\psi_k(h_k))\geq \sqrt{k}c.
\]
Hence $\psi$ is unbounded on $H$.
\end{proof}

An adaptation of Proposition 2.2.3 of \cite{ccjjv}, of \cite{CW} and of Theorem A.1 of \cite{GW} shows that there exists a measure-preserving $G$-action on a standard probability space $(S,\B_S,\nu)$ with the following properties:
\begin{enumerate}
\item [(a)] the restriction to $H$ of the action is weakly mixing (this is where the existence of $\psi$ in Lemma 3.1 is needed; see details below);
\item [(b)] there exists a \textit{non-trivial asymptotically invariant sequence} of Borel subsets of $S$, namely, there exists a sequence $(A_n)_{n\geq 1}\subset\B_S$ such that $\nu(A_n)=1/2$ for every $n$ and such that, for every compact set $K\subset G$, 
\[
\lim_{n\to\infty}\sup_{g\in K}\nu(gA_n\bigtriangleup A_n)=0
\]
where $A\bigtriangleup B=A\setminus B\cup B\setminus A$ for all sets $A,B$.
\end{enumerate}
Furthermore, the proof of Lemma 1.3 of \cite{AEG} shows that we can (and will) assume that $S$ is a compact metric space on which $G$ acts continuously, and that $\nu$ has support $S$.

\par\vspace{2mm}
We think that it is helpful to describe the construction of the probability $G$-space $(S,\B_S,\nu)$ with some details. We follow faithfully the proof of Theorem 2.2.2 of \cite{ccjjv}.

By Lemma 3.1, let $\psi:G\rightarrow\R_+$ be a conditionally negative definite function which is unbounded on $H$. For $n\geq 1$, we set $\f_n=\exp(-\psi/n)$, and we denote by $(\pi_n,\Hh_n,\xi_n)$ the associated Gel'fand-Naimark-Segal triple. Since $\f_n$ is real-valued, there exists a real Hilbert subspace $\Hh_n'$ of $\Hh_n$, containing $\xi_n$, such that
\[
\Hh_n=\Hh_n'\oplus i\Hh_n'\quad\textrm{and}\quad
\pi_n(g)\Hh_n'=\Hh_n'
\]
for all $n$ and $g$. We set $\Hh=\bigoplus_{n\geq 1}\Hh_n$, $\Hh'=\bigoplus_{n\geq 1}\Hh_n'$ so that $\Hh=\Hh'\oplus i\Hh'$, and $\pi=\bigoplus_{n\geq 1}\pi_n$. Finally, we identify $\xi_n$ with the corresponding vector 
\[
0\oplus\ldots\oplus\xi_n\oplus\ldots \in \Hh'
\]
and we observe that $\xi_n\perp\xi_m$ when $n\not=m$.

Next, set $\Hh^\sigma=\bigoplus_{k\geq 0}\Hh^{\times k}$, where $\Hh^{\times 0}=\C$, and for $k>0$, $\Hh^{\times k}$ is the $k$-th symmetric tensor product of $\Hh$, that is, the closed subspace of the Hilbert tensor product space $\Hh^{\otimes k}$ generated by the vectors of the form
\[
\sum_{s\in S_k}\eta_{s(1)}\otimes\ldots\otimes\eta_{s(k)}
\]
where $S_k$ denotes the usual permutation group. Then the representation $\pi$ extends in a natural way to a representation $\pi^\sigma$ of $G$ on $\Hh^\sigma$ which leaves the subspace $\Hh_0^\sigma=\Hh^\sigma\ominus \Hh^{\times 0}$ invariant. Finally, we denote by $\pi_0^\sigma$ the restriction of $\pi^\sigma$ to $\Hh_0^\sigma$.

\begin{lemma}
The restriction of $\pi_0^\sigma$ to $H$ is weakly mixing.
\end{lemma}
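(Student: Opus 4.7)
The plan is to verify condition (c) of Proposition 2.3 for $\pi_0^\sigma|_H$ on the total subset of $\Hh_0^\sigma$
$$\T=\bigcup_{k\geq 1}\bigl\{P(\pi(a_1)\xi_{n_1},\ldots,\pi(a_k)\xi_{n_k}):a_j\in G,\ n_j\geq 1\bigr\},$$
where $P(v_1,\ldots,v_k):=\sum_{\sigma\in S_k}v_{\sigma(1)}\otimes\cdots\otimes v_{\sigma(k)}$. Since $\xi_n$ is cyclic for $\pi_n$, the family $\{\pi(a)\xi_n:a\in G,\,n\geq 1\}$ is total in $\Hh=\bigoplus_n\Hh_n$; taking symmetric tensor products then shows that $\T$ is total in $\Hh_0^\sigma=\bigoplus_{k\geq 1}\Hh^{\times k}$.

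A direct computation on the symmetric Fock space, using $\pi=\bigoplus_n\pi_n$ together with the orthogonality $\Hh_n\perp\Hh_m$ for $n\neq m$, yields, for $v=P(\pi(a_1)\xi_{n_1},\ldots,\pi(a_k)\xi_{n_k})$ and $w=P(\pi(b_1)\xi_{m_1},\ldots,\pi(b_l)\xi_{m_l})$,
$$\langle\pi_0^\sigma(h)v,w\rangle=\delta_{k,l}\,k!\sum_{\rho\in S_k}\prod_{j=1}^k\delta_{n_j,m_{\rho(j)}}\exp\!\bigl(-\psi(b_{\rho(j)}^{-1}ha_j)/n_j\bigr),$$
since $\f_n(g)=\exp(-\psi(g)/n)$. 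Thus every such matrix coefficient is a finite sum of products of quantities of the form $\exp(-\psi(b^{-1}ha)/n)$ with finitely many fixed $a,b\in G$ and $n\in\N$.

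The remaining ingredient is that $\psi$, being real-valued, conditionally negative definite and vanishing at $e$, induces a left-invariant pseudo-distance $d(x,y):=\sqrt{\psi(x^{-1}y)}$ on $G$ (a classical consequence of the GNS construction for $\exp(-t\psi)$). Its triangle inequality, applied twice and combined with $\psi(g^{-1})=\psi(g)$, gives
$$\sqrt{\psi(b^{-1}ha)}\ \geq\ \sqrt{\psi(h)}-\sqrt{\psi(a)}-\sqrt{\psi(b)}.$$
Given finitely many $v^{(1)},\ldots,v^{(p)}\in\T$, only finitely many $a_j,b_i\in G$ and indices $n_j$ appear in the corresponding matrix coefficients; by Lemma 3.1 we may pick $h\in H$ with $\psi(h)$ arbitrarily large. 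The above inequality then forces every relevant $\psi(b_i^{-1}ha_j)$ to be arbitrarily large, hence every factor in each permanent-type sum to be arbitrarily small; since the sums have bounded length, every matrix coefficient $\langle\pi_0^\sigma(h)v^{(i)},v^{(j)}\rangle$ falls simultaneously below any prescribed $\ep>0$. This verifies Proposition 2.3(c) for $\pi_0^\sigma|_H$. The only real difficulty lies in the initial symmetric-Fock bookkeeping; once the matrix-coefficient formula is in hand, the pseudo-distance trick and the unboundedness of $\psi|_H$ provided by Lemma 3.1 do the rest routinely.
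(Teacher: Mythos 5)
Your proposal is correct and follows essentially the same route as the paper: reduce, via Proposition 2.3(c), to matrix coefficients of the form $\exp(-\psi(b^{-1}ha)/n)$ on a total set, and use the unboundedness of $\psi$ on $H$ together with the triangle inequality for $\sqrt{\psi}$ (the paper cites Lemma 2.1 of \cite{JolT} for this last point). If anything, your write-up is slightly more complete, since the paper verifies the weak mixing condition only on a total subset of $\Hh$ and leaves implicit the passage to the symmetric Fock space $\Hh_0^\sigma$, which you carry out explicitly.
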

\begin{proof}
Let $(h_\ell)_{\ell\geq 1}\subset H$ be a sequence such that $\psi(h_\ell)\to\infty$ as $\ell\to\infty$. Then, as in the proof of Lemma 2.1 of \cite{JolT}, for every finite set $F\subset G$, one has
\[
\max_{g,g'\in F}\psi(gh_\ell g')\to\infty
\]
as $\ell\to\infty$. As the set of vectors $\{\pi_n(g)\xi_n \colon g\in G, n\geq 1\}$ is total in $\Hh$, it suffices, by Proposition 2.3, to prove that, for all $g_1,\ldots,g_m\in G$, for every $n\geq 1$ and for every $\ep>0$ there exists $\ell\geq 1$ such that $|\langle\pi_n(g_jh_\ell g_k)\xi_n|\xi_n\rangle|<\ep$ for all $j,k$. But
\[
\langle\pi_n(g_jh_\ell g_k)\xi_n|\xi_n\rangle=\exp(-\psi(g_jh_\ell g_k)/n)\to 0
\]
as $\ell \to \infty$.
\end{proof}

In order to define $(S,\nu)$, we choose a countable orthonormal basis $\mathfrak{B}$ of $\Hh'$ which contains $\{\xi_n \colon n\geq 1\}$, we set
\[
(S,\nu):=\prod_{b\in \mathfrak B}\Big(\R,\frac{1}{\sqrt{2\pi}}\exp\Big(-\frac{x^2}{2}\Big)dx\Big)
\]
and we define the random variable $X_b:S\rightarrow\R$ by $X_b((\om_{b'})_{b'\in\mathfrak B})=\om_b$ for every $b$.
Then the map 
\[
\xi=\sum_{b\in\mathfrak B}\xi_b b\mapsto \sum_{b\in\mathfrak B}\xi_bX_b
\]
from $\Hh'$ to $L^2(S,\nu)$ extends to a surjective isometry $u:\Hh^\sigma\rightarrow L^2(S,\nu)$ which sends $\Hh^{\times 0}$ onto the space of constant functions on $S$ and such that
\[
u\Big(\sum_{s\in S_n}b_{s(1)}\otimes\ldots\otimes b_{s(n)}\Big)=n!X_{b_1}\ldots X_{b_n}
\]
for all $b_1,\ldots,b_n\in\mathfrak B$. Moreover, there is a $\nu$-preserving action of $G$ on $(S,\nu)$ such that $u^*\pi_S(g)u=\pi^\sigma(g)$ and $u^*\pi_{S,0}(g)u=\pi_0^\sigma(g)$ for all $g\in G$. In particular, Lemma 3.2 implies that the action of $H$ on $(S,\nu)$ is weakly mixing. Finally, the non-trivial asymptotically invariant sequence $(A_n)\subset \B_S$ of condition (b) above is obtained by setting
$A_n=\{\om\in S\colon X_{\xi_n}(\om)\geq 0\}$. See pages 23 and 24 of \cite{ccjjv} for further details.

\par\vspace{2mm}
Let us describe now our construction of the dynamical system $(\Om,\B,\mu,G)$ which is taken from \cite{DJZ}.

Let $(A_n)_{n\geq 1}\subset \B_S$ be the above non-trivial asymptotically invariant sequence. Then the following inequalities
\begin{align*}
|\nu(gA_n\cap A_n)-1/2|
&=
|\nu(gA_n\cap A_n)-\nu(A_n)|\\
&=
\int_S\chi_{A_n}(\chi_{gA_n}-\chi_{A_n})d\nu\\
&\leq
\int_S|\chi_{gA_n}-\chi_{A_n}|d\nu\\
&=
\nu(gA_n\bigtriangleup A_n)
\end{align*}
show that, for all positive integers $m$ and $k$, there exists an integer $n(k,m)$ such that 
\[
\sup_{g\in K_m}\left|\nu(gA_{n(k,m)} \cap A_{n(k,m)}) - \frac{1}{2}\right|\le \frac{1}{2}\left(1-e^{-\frac{1}{m2^{k}}}\right).
\]
Then set 
\[
B_m:=\prod_{k\geq 1}A_{n(k,m)}\subset X:=\prod_{k\geq 1}S
\]
for every $m>0$.

We equip the set $X$ with a $\sigma$-algebra $\Cc$ containing $(B_m)_{m\geq 1}$ and with a measure $\mu:\Cc\rightarrow [0,\infty]$ which have the following properties (see \cite{DJZ}, Propositions 2.8, 3.4 and 3.5):
\begin{enumerate}
\item [(i)] The $\sigma$-algebra $\Cc$ is generated by the collection (denoted by $\F_{c,0}$ in Definition 3.1 of \cite{DJZ}) of sets $C=\prod_n C_n$ such that $\prod_n 2\nu(C_n):=\lim_{N\to\infty}\prod_{n=1}^N 2\nu(C_n)$ exists in $[0,\infty)$ and such that $\prod_n 2\nu(C_n)=0$ or 
\begin{equation}
\lim_{N\to\infty}\prod_{n=N}^\infty 2\nu(gC_n\cap C_n)=1
\end{equation}
uniformly for $g\in K_1$.
\item [(ii)] The measure $\mu$ on $\Cc$ satisfies
\[
\mu(C)=\prod_n 2\nu(C_n)
\]
for every set $C=\prod_n C_n\in\F_{c,0}$.
\item [(iii)] The diagonal action of $G$ on $X$ is $\Cc$-measurable.
\item [(iv)] For every $A\in\Cc$ such that $\mu(A)<\infty$, one has
\[
\lim_{g\to e}\mu(gA\bigtriangleup A)=0.
\]
\end{enumerate}

\begin{remark}
\begin{enumerate}
\item [(1)] The measure $\mu$ on $X$ is not necessarily $\sigma$-finite, as is proved in Proposition 2.6 of \cite{DJZ}. 
Thus, equality (3.1) in (i) and property (iv) are continuity properties that are needed to restrict $\mu$ to a $G$-invariant subset $\Om\in\Cc$ on which is it $\sigma$-finite: see Section 3 of \cite{DJZ}.
\item [(2)] We observe that the sequence $(B_m)_{m\geq 1}$ is contained in $\Cc$: indeed, as 
\[
e^{-\frac{1}{m2^k}}\leq 2\nu(gA_{n(k,m)}\cap A_{n(k,m)})\leq 1
\]
for every $g\in K_m$ and for every $k\geq 1$, we get that
\begin{align*}
1
&\ge 
\lim_{N\rightarrow \infty } \prod_{k=N}^{\infty} 2\nu(gA_{n(k,m)} \cap A_{n(k,m)}) \\
&\ge 
\lim_{N\rightarrow \infty } \prod_{k=N}^{\infty} e^{-\frac{1}{m2^{k}}} \\
&= 
e^{-\frac{1}{m} \lim_{N\rightarrow \infty } \sum_{k=N}^{\infty} \frac{1}{2^{k}}}=1.
\end{align*}
In particular, it converges uniformly for $g\in K_1$.
\end{enumerate}
\end{remark}

The following proposition is inspired by Proposition 2.8 of \cite{DJZ}.

\begin{proposition}
The dynamical system $(X,\Cc,\mu,H)$ is weakly $C_0$, namely, for all $A,B\in \Cc$ such that $\mu(A),\mu(B)<\infty$ and for every $\ep>0$, there exists $h\in H$ such that $\mu(hA\cap B)<\ep$.
\end{proposition}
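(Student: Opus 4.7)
The plan is to mimic the argument of Proposition 2.8 of \cite{DJZ}, exploiting coordinate-by-coordinate the weak mixing of the $H$-action on $(S,\nu)$ supplied by Lemma 3.2. A routine approximation argument (using that $\F_{c,0}$ generates $\Cc$ and that $\mu$ is determined on $\F_{c,0}$ by property (ii)) should let me replace $A,B\in\Cc$ of finite measure by finite disjoint unions of cylinders from $\F_{c,0}$, and hence reduce the problem to a single pair $A=\prod_k A_k$ and $B=\prod_k B_k$ in $\F_{c,0}$. The case $\mu(A)\mu(B)=0$ is trivial; otherwise $hA\cap B$ equals the product cylinder $\prod_k(hA_k\cap B_k)$, lies in $\Cc$ by property (iii), and I expect its measure to be bounded above by $\prod_k 2\nu(hA_k\cap B_k)$.

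Because $\prod_k 2\nu(A_k)=\mu(A)>0$ is a convergent positive infinite product, we have $2\nu(A_k)\to 1$ and the tails $\prod_{k>N}2\nu(A_k)\to 1$ as $N\to\infty$; the same holds for the $B_k$. I then choose $N$ so large that $\prod_{k>N}2\nu(A_k)\le 2$, $\prod_{k=1}^N 2\nu(A_k)\le 2\mu(A)$, with the analogous bounds for the $B_k$, and also that $16\mu(A)\mu(B)/2^N<\ep$. By Lemma 3.2 the $H$-action on $(S,\nu)$ is weakly mixing, so Lemma 2.5 yields $h\in H$ with $|\nu(hA_k\cap B_k)-\nu(A_k)\nu(B_k)|<\delta$ for every $k=1,\dots,N$, where $\delta$ is chosen small enough that
\[
\prod_{k=1}^N 2\nu(hA_k\cap B_k)\le 2\prod_{k=1}^N 2\nu(A_k)\nu(B_k)=2^{1-N}\prod_{k=1}^N 2\nu(A_k)\cdot\prod_{k=1}^N 2\nu(B_k)\le \frac{8\mu(A)\mu(B)}{2^N}.
\]
Combining with the trivial tail estimate $\nu(hA_k\cap B_k)\le\nu(A_k)$, I obtain
\[
\mu(hA\cap B)\le \prod_{k=1}^N 2\nu(hA_k\cap B_k)\cdot\prod_{k>N}2\nu(A_k)\le \frac{16\mu(A)\mu(B)}{2^N}<\ep.
\]

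The main obstacle I foresee is the measure-theoretic bookkeeping at the outset: justifying both the reduction to cylinders in $\F_{c,0}$ and the inequality $\mu(\prod_k C_k)\le \prod_k 2\nu(C_k)$ for the intersection cylinder $hA\cap B$, which need not itself lie in $\F_{c,0}$. This step depends on the explicit construction of $\mu$ from \cite{DJZ} and its covering/outer-regularity behaviour on $\Cc$; once it is in hand, the combinatorial heart of the argument—extracting a factor of $2^{-N}$ from the first $N$ near-independent coordinates via Lemma 2.5, which easily swamps the bounded tail—is transparent.
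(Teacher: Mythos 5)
Your core estimate is correct and is essentially the paper's argument: both proofs invoke Lemma 2.5 to make finitely many coordinates of $hA\cap B$ nearly independent, so that each such coordinate contributes a factor close to $2\nu(A_k)\nu(B_k)\approx \tfrac12\cdot 2\nu(A_k)$ rather than $2\nu(A_k)$, and the resulting geometric decay kills the bounded tail. The only cosmetic difference is where the decay is harvested: you take the first $N$ coordinates and compare products globally (getting $2^{-N}$ times $\prod 2\nu(A_k)\prod 2\nu(B_k)$), while the paper works in a window $\{N,\dots,N+m\}$ where $\nu(A_n),\nu(B_n)$ are already close to $\tfrac12$ and bounds each factor by a fixed $\delta<1$. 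Both are valid; your choice of $\delta$ depending on $N$ and on the (positive) numbers $\nu(A_k)\nu(B_k)$ for $k\le N$ is legitimate since $N$ is fixed first.

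The one point where the write-up as stated would fail is the claim that the approximation step reduces everything ``to a single pair'' in $\F_{c,0}$. After covering $A$ and $B$ by finitely many cylinders $C_1,\dots,C_M$ and $D_1,\dots,D_{N'}$ (up to an $\ep$-tail), you must produce \emph{one} $h\in H$ with $\sum_{j,k}\mu(hC_j\cap D_k)$ small; treating the pairs one at a time gives a different $h$ for each pair. This is fixable precisely because Lemma 2.5 applies to finitely many sets of $S$ simultaneously, so the cylinder argument can be run for all pairs $(C_j,D_k)$ with a common $h$ — this is how the paper proceeds, passing through the semiring $\F_c$ and the ring $\Rr(\F_c)$. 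Relatedly, a general finite-measure $A\in\Cc$ is not a finite disjoint union of cylinders: the paper uses countable outer covers $A\subset\bigcup_k C_k$ with $\sum_k\mu(C_k)<\mu(A)+\ep$ and then truncates, absorbing the tails $\bigcup_{k>M}gC_k$ and $\bigcup_{\ell>N}D_\ell$ by $G$-invariance of $\mu$. You correctly flag the inequality $\mu(hA\cap B)\le\prod_k 2\nu(hA_k\cap B_k)$ as needing the \cite{DJZ} machinery; the paper relies on the same facts (monotonicity of $\mu$ and Proposition 3.4 of \cite{DJZ}) without further comment, so that is not a substantive divergence.
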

\begin{proof}
Assume first that $A,B\in\F_{c,0}$ have positive measures, and write $A=\prod_nA_n$ and $B=\prod_nB_n$, so that 
\[
\mu(A)=\prod_{n \ge 1} 2 \nu(A_n) \quad \text{and} \quad \mu(B)=\prod_{n \ge 1} 2 \nu(B_{n}).
\]
Let $\ep>0$ be fixed and take  $\ep'>0$ small enough in order that 
\[
\delta :=\frac{1}{2}+ \ep' +\frac{\ep'}{\frac{1}{2}- \ep'}<1.
\]
Since $0<\mu(A),\mu(B)<\infty$, there exists $N$ large enough such that 
\[
\frac{1}{2}-\ep'<\nu(A_{n}),\nu(B_{n})<\frac{1}{2}+\ep' \quad \forall n\ge N.
\]
Since $\delta < 1$, there exists $m$ large enough such that $\delta^{m+1}<\ep/\mu(A)$. The action of $H$ on $(S,\nu)$ being weakly mixing, by Lemma 2.5,
there exists $h\in H$ such that
\[
|\nu(hA_{n}\cap B_{n})-\nu(A_{n})\nu(B_{n})|\le \ep'
\]
for all $n\in\{N,\ldots,N+m\}$.
Then we have
\begin{align*}
\mu(hA\cap B)
&\leq 
\prod_{n=1}^{N-1} 2\nu(A_{n})\cdot\prod_{n=N}^{N+m}2(\nu(A_{n})\nu(B_{n})+\ep')\cdot\prod_{n\ge N+m+1}2\nu(A_{n})\\
&= 
\mu(A) \cdot \prod_{n=N}^{N+m} \frac{2\nu(A_{n})\nu(B_{n})+2\ep'}{2\nu(A_{n})}\\
&=
\mu(A)\prod_{n=N}^{N+m} \left(\nu(B_{n})+\frac{\ep'}{\nu(A_{n})}\right)\\
&< 
\mu(A)\prod_{n=N}^{N+m} \left(\frac{1}{2}+ \ep' +\frac{\ep'}{\frac{1}{2}- \ep'}\right)\\
&=
\mu(A)\delta^{m+1}<\ep.
\end{align*}
Hence the claim holds for all sets $A,B\in\F_{c,0}$.

The same claim holds for $A,B$ which belong to the semiring $\F_c$ generated by $\F_{c,0}$ because, by Proposition 3.4 of \cite{DJZ}, for all $A,B\in\F_c$, there exist $C,D\in\F_{c,0}$ such that $A\subset C$ and $B\subset D$.

Moreover, it also holds for all elements of the ring $\Rr(\F_c)$ generated by $\F_c$. 

Finally, if $A,B\in\Cc$ are such that $0<\mu(A),\mu(B)<\infty$, if $\ep>0$ is given, there exist two sequences $(C_k)_{k\geq 1},(D_\ell)_{\ell\ge 1}\subset \Rr(\F_c)$ such that 
\[
A\subset \bigcup_{k\geq 1}C_k\quad\text{and}\quad B\subset\bigcup_{\ell\ge 1}D_\ell
\]
and
\[
\mu(A)\leq \sum_k\mu(C_k)<\mu(A)+\ep\quad\text{and}\quad \mu(B)\le\sum_\ell \mu(D_\ell)<\mu(B)+\ep.
\]
Choose first $N$ large enough so that $\sum_{\ell>N}\mu(D_\ell)<\ep/3$. Then, as
\[
gA\cap B\subset \left(\bigcup_{\ell=1}^N gA\cap D_\ell\right) \cup \left(\bigcup_{\ell>N}D_\ell\right),
\]
we get
\[
\mu(gA\cap B)\le \sum_{\ell=1}^N\mu(gA\cap D_\ell)+\ep/3
\]
for every $g\in G$.

Choose next $M$ large enough so that $\sum_{k>M}\mu(C_k)<\ep/3N$. Then, as
\[
gA\cap D_\ell\subset \left(\bigcup_{k=1}^M gC_k\cap D_\ell\right)\cup\left(\bigcup_{k>M}gC_k\right)
\]
for every $1\leq\ell\leq N$, and since $\mu$ is $G$-invariant, we get
\[
\mu(gA\cap B)\leq \sum_{\ell=1}^N\sum_{k=1}^M\mu(gC_k\cap D_\ell)+2\ep/3
\]
for every $g\in G$. By the previous part of the proof applied to $C:=\bigcup_{k=1}^M C_k$ and $D:=\bigcup_{\ell=1}^N D_\ell$, there exists $h\in H$ such that 
\[
\mu(hA\cap B)<\ep.
\]
\end{proof}

The proof of Theorem \ref{mainthm} will be complete if we prove that there is a $G$-invariant set $\Om\in\Cc$ on which $\mu$ is $\sigma$-finite.

As in Proposition 3.8 of \cite{DJZ}, we fix a countable dense subset $D\ni e$ of $G$, we set $Y=\bigcup_{m\geq 1}B_m$ and
\[
\Om=\bigcup_{h\in D}hY
\]
on which $\mu$ is obviously $\sigma$-finite.

The continuity condition (iv) implies that $\Om$ is $G$-invariant (see the proof of Proposition 3.8 of \cite{DJZ}),
thus the proof of Theorem \ref{mainthm} is complete.

\par\vspace{3mm}

\bibliographystyle{plain}
\bibliography{refFaible}

\begin{thebibliography}{1}

\bibitem{AEG}
S.~Adams, G.~A. Elliot, and T.~Giordano.
\newblock Amenable actions of groups.
\newblock {\em Trans. AMS}, 344:803--822, 1994.

\bibitem{BHV}
B.~Bekka, P.~de~la Harpe, and A.~Valette.
\newblock {\em Kazhdan's {P}roperty $(T)$}.
\newblock Cambridge University Press, 2008.

\bibitem{BR}
V.~Bergelson and J.~Rosenblatt.
\newblock Mixing actions of groups.
\newblock {\em Illinois J. Math.}, 32:65--80, 1988.

\bibitem{ccjjv}
P.-A. Cherix, M.~Cowling, P.~Jolissaint, P.~Julg, and A.~Valette.
\newblock {\em Groups with the {H}aagerup property ({G}romov's
  a-{T}-menability)}.
\newblock Birkh\"auser, {B}asel, 2001.

\bibitem{CW}
A.~Connes and B.~Weiss.
\newblock Property {T} and asymptotically invariant sequences.
\newblock {\em Israel J. Math.}, 37:209--210, 1980.

\bibitem{DJZ}
T.~Delabie, P.~Jolissaint, and A.~Zumbrunnen.
\newblock A new characterization of the {H}aagerup property by actions on
  infinite measure spaces.
\newblock arXiv:1904.05668v4, 19 pages.

\bibitem{Glas}
E.~Glasner.
\newblock {\em Ergodic {T}heory via {J}oinings}.
\newblock Amer. Math. Soc., Mathematical Surveys and Monographs, v. 101, 1981.

\bibitem{GW}
E.~Glasner and B.~Weiss.
\newblock Kazhdan's property {T} and the geometry of the collection of
  invariant measures.
\newblock {\em Geom. Funct. Anal.}, 7:917--935, 1997.

\bibitem{JolT}
P~Jolissaint.
\newblock On property {(T)} for pairs of topological groups.
\newblock {\em L'Ens. Math.}, 51:31--45, 2005.

\end{thebibliography}

\end{document}